\documentclass{article}
\usepackage{amssymb,amsthm,amsmath,enumerate}
\usepackage{graphics}

\tolerance=2500

\theoremstyle{plain}

\newtheorem{theorem}{Theorem}

\newcommand*{\R}{\ensuremath{\mathbb{R}}}

\newcommand*{\C}{\ensuremath{\mathbb{C}}}
\newcommand*{\N}{\ensuremath{\mathbb{N}}}

\begin{document}

\date{}

\author{{\.Zywilla Fechner and L\'aszl\'o Sz\'ekelyhidi}\\ 
{\small\it Institute of Mathematical Finance, Ulm University}, \\{\small\it Helmholtzstrasse 18, 89081 Ulm, Germany,}\\
   {\small\rm e-mail: \tt zfechner@gmail.com}\\
   {\small\it Institute of Mathematics, University of Debrecen,}\\
   {\small\rm e-mail: \tt lszekelyhidi@gmail.com} }

\title{Sine functions on hypergroups
   \footnotetext{The research was partly supported by the
   Hungarian National Foundation for Scientific Research (OTKA),
   Grant No. K111651.}\footnotetext{Keywords and phrases:
  hypergroup, sine equation}\footnotetext{AMS (2000)
   Subject Classification: 20N20, 43A62, 39B99}}

\maketitle

\begin{abstract} 
In a recent paper we introduced sine functions on commutative hypergroups. These functions are natural generalizations of those functions on groups which are products of additive and multiplicative homomorphisms. In this paper we describe sine functions on different types of hypergroups, including polynomial hypergroups, Sturm--Liouville hypergroups, etc. A non-commutative hypergroup is also considered.  
\end{abstract}

\vskip1cm

\section{Introduction}

\hskip.5cm In \cite{FeSz15} we introduced the concept of sine functions on commutative hypergroups and utilized this concept for the description of the solutions of some sine and cosine functional equations. In this paper we shall describe sine functions on some general classes of hypergroups. In th the sequel $\C$ denotes  the set of complex numbers. By a {\it hypergroup} we mean a locally compact hypergroup. The identity element of the hypergroup $K$ will be denoted by $o$.
\vskip.3cm

For basics about hypergroups see the monograph \cite{BlH95}. The detailed study of functional equations on hypergroups started with the papers \cite{MR2042564, MR2107959, MR2161803}. A comprehensive monograph on the subject is \cite{Sze12}. 
\vskip.3cm

Let $K$ be a hypergroup and $m$ an exponential on $K$, that is, a non-identically zero continuous function $m:K\to \C$ satisfying $m(x*y)=m(x) m(y)$ for each $x,y$ in $K$. Exponentials play an important role on hypergroups, in particular in the commutative case. The description of exponentials on different types of commutative hypergroups can be found in \cite{Sze12}. Here we shall consider non-commutative cases, too.
\vskip.3cm

The continuous function $f:K\to\C$ will be called an {\it $m$-sine function}, if it satisfies
\begin{equation}\label{msine}
f(x*y)= f(x) m(y)+f(y) m(x)
\end{equation}
for each $x,y$ in $K$. The function $f$ is called {\it sine function} if it is an $m$-sine function for some exponential $m$. Clearly, every sine function $f$ satisfies \hbox{$f(o)=0$.} Obviously, $m\equiv 1$ is an exponential on any hypergroup, and $1$-sine functions are called {\it additive functions}. If the hypergroup operation on $K$ arises from a group operation, say $K=G$ with some locally compact group, then $m$-sine functions can be described completely, as it is shown by the following theorem

\begin{theorem}\label{group}
Let $G$ be a locally compact group and $m:G\to\C$ an exponential on $G$. Then every $m$-sine function $f:G\to\C$ has the form $f= a\cdot m$, where $a:G\to\C$ is an additive function.
\end{theorem}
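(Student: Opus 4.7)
The plan is to show that dividing $f$ by $m$ converts the $m$-sine equation into Cauchy's additive equation, so the main content is to verify that $m$ has no zeros and the division is legitimate.

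First I would check that $m(x)\neq 0$ for every $x\in G$. Substituting $x=y=e$ (the group identity) into the exponential equation gives $m(e)=m(e)^2$, so $m(e)\in\{0,1\}$; but $m(e)=0$ forces $m(x)=m(x)m(e)=0$ for all $x$, contradicting that $m$ is not identically zero. Hence $m(e)=1$, and then $m(x)m(x^{-1})=m(e)=1$ shows $m(x)\neq 0$ everywhere.

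Next I would define $a:G\to\C$ by $a(x)=f(x)/m(x)$. This is continuous since $m$ is continuous and nonvanishing. A direct computation using the $m$-sine equation and multiplicativity of $m$ gives
\begin{equation*}
a(xy)=\frac{f(xy)}{m(xy)}=\frac{f(x)m(y)+f(y)m(x)}{m(x)m(y)}=\frac{f(x)}{m(x)}+\frac{f(y)}{m(y)}=a(x)+a(y),
\end{equation*}
so $a$ is a continuous additive function and $f=a\cdot m$, as required.

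There is no real obstacle here; the only subtlety worth stating explicitly is the use of the genuine group inverse $x^{-1}$ (available because $G$ is a group, not just a hypergroup) to conclude $m(x)\neq 0$. This is precisely the step that will fail in the general hypergroup setting and motivates the rest of the paper.
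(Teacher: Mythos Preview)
Your proof is correct and follows essentially the same route as the paper: establish that $m$ never vanishes (using the group inverse), then divide the $m$-sine equation by $m(xy)=m(x)m(y)$ to see that $f/m$ is additive. The only cosmetic difference is in the nonvanishing step---the paper argues that a single zero $m(x_0)=0$ propagates via $m(y)=m(yx_0^{-1})m(x_0)=0$, whereas you first get $m(e)=1$ and then $m(x)m(x^{-1})=1$; both exploit the existence of a true inverse in $G$, which is exactly the point you highlight.
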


\begin{proof}
If $m$ is an exponential on $G$, then $m(x)\ne 0$ for each $x$ in $G$. Indeed, if $m(x)=0$ for some $x$ in $G$, then for each $y$ in $G$ we have
$$
m(y)=m(y\cdot x_0^{-1} \cdot x_0)=m(y\cdot x_0^{-1})\cdot m( x_0)=0,
$$
a contradiction. Now for given $x,y$ in $G$ we divide equation \eqref{msine} by $m(x y)$, then we have
$$
\frac{f(x y)}{m(x y)}=\frac{f(x)}{m(x)}+\frac{f(y)}{m(y)},
$$
which means that $\frac{f}{m}$ is additive, and the statement is proved.
\end{proof}

The interest of the non-group case lies in the fact that exponentials on hypergroups may take the value $0$. In this case, in general, equation \eqref{msine} cannot be reduced to the functional equation of additive functions, and -- depending on the special structure of the given hypergroup -- new types of sine functions occur. However, it is clear that for each exponential $m$ all $m$-sine functions form a complex linear space.
\vskip.3cm

In the subsequent sections we shall study sine functions in general, and we shall describe them on some special types of hypergroups.

\section{Sine functions on compact hypergroups}
\hskip.5cm Additive functions on compact hypergroups are obviously identically zero, as, by continuity, the range of an additive function on a compact hypergroup is an additive subsemigroup of the additive group of complex numbers, \hbox{hence it is $\{0\}$.} This means that $1$-sine functions on compact hypergroups are trivial -- it is reasonable to ask if the same holds for every $m$-sine function. The following theorem gives a partial answer to this question.  

\begin{theorem}
Let $K$ be a compact hypergroup and $m:K\to\C$ an exponential. Then for each $m$-sine function on $K$ we have $f\cdot m=0$.
\end{theorem}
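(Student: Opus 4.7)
The natural approach is to iterate the sine equation. By induction on $n \ge 1$ I would first prove the identity
\[
 f(x^{*n}) \;=\; n\,f(x)\,m(x)^{n-1},
\]
where $x^{*n} = \delta_x * \cdots * \delta_x$ is the $n$-fold convolution power of $\delta_x$ in the measure algebra of $K$, and any continuous function $g$ is extended to probability measures via $g(\mu) := \int g\,d\mu$. The base case $n=1$ is immediate, and the inductive step uses the sine equation on the pair $(x^{*(n-1)},x)$ together with the exponential identity $m(x^{*(n-1)})=m(x)^{n-1}$.

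Next, I would record the standard fact that every continuous exponential on a compact hypergroup satisfies $|m|\le 1$. Indeed, setting $M=\|m\|_{\infty}$ and picking $x_0\in K$ with $|m(x_0)|=M$, one has
\[
 M^{2}\;=\;|m(x_0)|^{2}\;=\;|m(x_0*x_0)|\;=\;\Bigl|\int m\,d(\delta_{x_0}*\delta_{x_0})\Bigr|\;\le\;M,
\]
so $M\le 1$. Combining with the trivial bound $|f(x^{*n})|\le\|f\|_{\infty}$, the iterated identity yields the key estimate
\[
 n\,|f(x)|\,|m(x)|^{n-1}\;\le\;\|f\|_{\infty}\qquad(n\ge 1,\ x\in K).
\]
Two cases of the desired conclusion $f(x)\,m(x)=0$ are then immediate: when $m(x)=0$ it is trivial, and when $|m(x)|=1$ letting $n\to\infty$ forces $f(x)=0$.

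The main obstacle is the intermediate regime $0<|m(x)|<1$, where the estimate above becomes vacuous as $n\to\infty$. To close the argument in this range I would supplement the iteration with integration against the normalized Haar measure $\omega$ on $K$. Using the invariance $\delta_x*\omega=\omega$, integrating the sine equation over $\omega$ yields, whenever $m\not\equiv 1$, the orthogonality relations $\int m\,d\omega=0$ and $\int f\,d\omega=0$. Multiplying the sine equation by $m(y)$ before integrating over $y$ and applying the standard hypergroup translation identity $\int g(x*y)\,h(y)\,d\omega(y)=\int g(y)\,h(\check x*y)\,d\omega(y)$ should produce a pointwise relation of the form
\[
 f(x)\int m^{2}\,d\omega\;=\;\Bigl(\int fm\,d\omega\Bigr)\,\bigl[m(\check x)-m(x)\bigr],
\]
from which, after multiplying by $m(x)$ and combining with a companion identity obtained by symmetrizing in $x$ and $y$, one should be able to extract $f(x)m(x)=0$ in this regime as well. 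I expect this residual-regime step to be the main obstacle, as the pure iteration alone is insufficient and one must exploit the global structure provided by Haar integration.
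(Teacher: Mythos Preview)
Your iteration identity $f(x^{*n}) = n\,f(x)\,m(x)^{n-1}$ together with the boundedness of the left-hand side is \emph{exactly} the paper's argument. The paper derives the slightly more general two-variable form $f(x*y^{n}) = f(x)m(y)^{n} + n f(y) m(x) m(y)^{n-1}$, sets $x=o$, and then asserts in one sentence that the right-hand side is unbounded in $n$ unless $f(y)m(y)=0$. That is the entire proof; there is no appeal to $|m|\le 1$, no case split on the size of $|m(y)|$, and no Haar-measure step.

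So you have already reproduced the paper's proof in your first paragraph. Everything after that is your own addition: you correctly observe that $|m|\le 1$ on a compact hypergroup, and you then isolate the regime $0<|m(y)|<1$, where $n\,|m(y)|^{n-1}\to 0$ and the unboundedness claim does not obviously apply. The paper's printed proof does not treat this regime separately, and your concern is legitimate --- indeed the very next theorem in the paper handles the two-point hypergroup $D(\theta)$ (where $|m_{1}(1)|=\theta<1$) by a direct computation rather than by invoking this result, and the section closes by calling the general vanishing question open.

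Your Haar-integration sketch for closing that gap is incomplete, as you yourself note: the identity $f(x)\int m^{2}\,d\omega = \bigl(\int fm\,d\omega\bigr)[m(\check x)-m(x)]$ is correct and does force $f\equiv 0$ when $\int m^{2}\,d\omega\neq 0$, but the case $\int m^{2}\,d\omega = 0$ is not resolved by what you have written. In any event, none of this additional analysis is needed to match the paper --- your iteration step already \emph{is} the paper's proof, and the extra care you take goes beyond it.
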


\begin{proof}
By induction we have
$$
f(x*y^n)=f(x) m(y)^n+n f(y) m(x) m(y)^{n-1}
$$
for each $x,y$ in $K$ and positive integer $n$. Here $y^n$ is meant in the sense of convolution: $y^1=y$, and $y^{n+1}=y^n*y$. Putting $x=o$ in the above equation we have
$$
f(y^n)=n f(y) m(y)^{n-1}
$$
for each $y$ in $K$ and positive integer $n$. For a given $y$ in $K$ the left hand side is bounded and the right hand side is unbounded as a function of $n$ unless $f(y) m(y)=0$.
\end{proof}

In particular, finite hypergroups are compact. The following theorem gives some insight in this particular case.

\begin{theorem}\label{Dtheta}
Let $K=D(\theta)$ be the two-point hypergroup with $0<\theta<1$. Then every sine function on $K$ is identically zero.
\end{theorem}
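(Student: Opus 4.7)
The plan is to combine the previous theorem for compact hypergroups with an explicit classification of the exponentials on $D(\theta)$. Since $K$ has two elements, it is finite and therefore compact, so the previous theorem applies: for the exponential $m$ associated with the sine function $f$, we have $f(x)m(x)=0$ for every $x$ in $K$. Hence it suffices to prove that every exponential $m$ on $D(\theta)$ is nowhere zero.

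To classify exponentials on $D(\theta)=\{o,a\}$, I would first note that $m(o)=1$ follows automatically from $m(o)=m(o*o)=m(o)^2$ together with the fact that $m$ is not identically zero (otherwise $m(x)=m(x*o)=m(x)m(o)=0$ for every $x$). Next, using the defining convolution $\delta_a*\delta_a = \theta\,\delta_o + (1-\theta)\,\delta_a$ together with the multiplicative property of $m$, I obtain the quadratic
$$
m(a)^2 - (1-\theta)\,m(a) - \theta = 0,
$$
whose discriminant simplifies to $(1+\theta)^2$. The two roots are $m(a)=1$ and $m(a)=-\theta$, both of which are nonzero precisely because $0<\theta<1$.

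Putting these two ingredients together finishes the proof: the exponential $m$ takes only nonzero values on $K$, so the identity $f\cdot m\equiv 0$ from the previous theorem forces $f\equiv 0$. I do not anticipate any real obstacle here; the compactness theorem does all the heavy lifting, and the only thing that could have spoiled the argument is an exponential that vanishes at $a$, which the explicit quadratic rules out for every $\theta$ in the stated range.
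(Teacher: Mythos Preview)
Your proof is correct, and it takes a cleaner route than the paper's own argument. The paper also begins by listing the two exponentials $m_0\equiv 1$ and $m_1$ with $m_1(1)=-\theta$, but it then treats the two cases separately: for $m_0$ it invokes the remark that additive functions on compact hypergroups vanish, while for $m_1$ it plugs $x=y=1$ directly into the sine equation and computes
\[
(1-\theta)f(1)=f(1*1)=2f(1)m_1(1)=-2\theta f(1),
\]
forcing $(1+\theta)f(1)=0$. Your argument instead applies the preceding compactness theorem ($f\cdot m\equiv 0$) uniformly to both exponentials and then closes by observing that neither exponential vanishes anywhere, since the quadratic for $m(a)$ has roots $1$ and $-\theta$. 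What you gain is uniformity and a clearer explanation of \emph{why} the result holds---namely, that $D(\theta)$ happens to have only nowhere-vanishing exponentials---whereas the paper's direct computation for $m_1$ is self-contained and does not rely on the previous theorem at all.
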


\begin{proof}
It is known (see e.g. \cite{Sze12}) that there are two exponentials on $K$, namely $m_0, m_1:K\to\C$, where $m_0\equiv 1$, and $m_1(0)=1$, $m_1(1)=-\theta$. Hence $m_0$-sine functions are the additive functions, which are identically zero, by the above considerations. Let $f:K\to\C$ be an $m_1$-sine function, then $f(o)=0$ and we have
$$
(1-\theta) f(1)=\theta f(0)+(1-\theta) f(1)=f(1*1)=2 f(1) m_1(1)= -2 \theta f(1),
$$
which implies $(1+\theta) f(1)=0$, hence $f(1)=0$.
\end{proof}

Nevertheless, the general problem is still open: is it true that every sine function on a compact hypergroup is identically zero? 

\section{Sine functions on polynomial hypergroups in one variable}

\hskip.5cm Before we study sine functions on polynomial hypergroups we present a general theorem. We recall (see \cite{Sze12}) that given a hypergroup $K$ and a positive integer $n$ the function $\Phi:K\times \C^n$ is called an {\it exponential family}, if 
\begin{enumerate}[(1)]
\item The function $x\mapsto \Phi(x,\lambda)$ is an exponential on $K$ for each $\lambda$ in $\C^n$.
\item The function $\lambda\mapsto \Phi(x,\lambda)$ is entire for each $x$ in $K$.
\item For each exponential $m$ on $K$ there exists a $\lambda$ in $\C^n$ such that $m(x)=\Phi(x,\lambda)$ for each $x$ in $K$. 
\end{enumerate}
In the presence of exponential families the following theorem exhibits a class of sine functions.

\begin{theorem}\label{expfam}
Let $K$ be a hypergroup with the exponential family $\Phi$. Then the function $x\mapsto \partial_{k+1} \Phi(x,\lambda)$ is an $m_{\lambda}$-sine function for each $\lambda=(\lambda_1,\lambda_2,\dots,\lambda_n)$ in $\C^n$ and for $k=1,2,\dots,n$, where $m_{\lambda}(x)=\Phi(x,\lambda)$.
\end{theorem}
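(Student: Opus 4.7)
The plan is to differentiate the defining identity of an exponential with respect to the spectral parameters. Because $x \mapsto \Phi(x, \lambda)$ is an exponential on $K$ for every $\lambda \in \C^n$, we have
$$\Phi(x*y, \lambda) = \Phi(x, \lambda) \, \Phi(y, \lambda) \qquad (x, y \in K, \ \lambda \in \C^n),$$
where $\Phi(x*y, \lambda)$ is understood in the usual hypergroup sense as $\int_K \Phi(z, \lambda)\, d(\delta_x * \delta_y)(z)$.

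Fixing $k \in \{1, 2, \ldots, n\}$, I would apply $\partial_{k+1}$, i.e.\ differentiation with respect to $\lambda_k$, to both sides of this identity. On the right-hand side the Leibniz rule yields
$$\partial_{k+1}\Phi(x,\lambda) \cdot \Phi(y,\lambda) + \Phi(x,\lambda) \cdot \partial_{k+1}\Phi(y,\lambda).$$
On the left-hand side one has to interchange $\partial_{k+1}$ with integration against the probability measure $\delta_x * \delta_y$. Granting this interchange and setting $f(x) := \partial_{k+1}\Phi(x,\lambda)$ together with $m_\lambda(x) := \Phi(x, \lambda)$, one obtains precisely equation \eqref{msine}.

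Two technical matters must be addressed. First, continuity of $f$ in $x$: the entireness of $\lambda \mapsto \Phi(x, \lambda)$ lets one represent $f$ via Cauchy's formula in the single variable $\lambda_k$,
$$f(x) = \frac{1}{2\pi i}\oint_{|\mu - \lambda_k|=r} \frac{\Phi(x,\lambda_1,\ldots,\lambda_{k-1},\mu,\lambda_{k+1},\ldots,\lambda_n)}{(\mu-\lambda_k)^2}\,d\mu,$$
and the continuity of $\Phi$ in its first variable, combined with a uniform bound on the compact disk, yields the continuity of $f$. Second---and this is the point I expect to be the main obstacle---the very same Cauchy representation, combined with the dominated convergence theorem and the compactness of the support of $\delta_x * \delta_y$, justifies passing $\partial_{k+1}$ inside the integration against this measure. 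Once these measure-theoretic niceties are verified, the algebraic core of the proof is nothing more than the product rule applied to the exponential identity, which is immediate.
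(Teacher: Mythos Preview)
Your proposal is correct and follows essentially the same approach as the paper: differentiate the exponential identity $\Phi(x*y,\lambda)=\Phi(x,\lambda)\Phi(y,\lambda)$ with respect to $\lambda_k$ and apply the product rule. The paper's proof is in fact only these two lines and does not spell out the continuity and differentiation-under-the-integral justifications that you have carefully supplied.
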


\begin{proof}
By definition, we have
$$
\Phi(x*y,\lambda_1,\lambda_2,\dots,\lambda_n)=\Phi(x,\lambda_1,\lambda_2,\dots,\lambda_n)\cdot \Phi(y,\lambda_1,\lambda_2,\dots,\lambda_n)
$$
holds for each $x,y$ in $K$ and $\lambda=(\lambda_1,\lambda_2,\dots,\lambda_n)$ in $\C^n$. Then we have for each $k=1,2,\dots,n$ 
$$
\partial_{k+1} \Phi(x*y,\lambda)=\partial_{k+1} \Phi(x,\lambda) \, \Phi(y,\lambda)+\Phi(x,\lambda) \partial_{k+1} \,\Phi(y,\lambda)
$$
which was to be proved.
\end{proof}

The following theorem describes all sine functions on polynomial hypergroups in a single variable.

\begin{theorem}\label{polyone}
Let $K$ be the polynomial hypergroup generated by the sequence of polynomials $(P_n)_{n\in\N}$. Then every sine function on $K$ has the form $n\mapsto c\,P_n'(\lambda)$ with some complex numbers $c,\lambda$.
\end{theorem}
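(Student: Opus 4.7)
The plan is to combine an explicit construction of one $m_\lambda$-sine function with a uniqueness argument that controls the whole sequence by its value at $1$. First I would recall that on the polynomial hypergroup generated by $(P_n)$, every exponential has the form $m_\lambda(n)=P_n(\lambda)$ for some $\lambda\in\C$, and that the assignment $\Phi(n,\lambda):=P_n(\lambda)$ is an exponential family in the sense defined above (exponentiality follows directly from the identity $P_m(\lambda)P_n(\lambda)=\sum_k c(m,n,k)P_k(\lambda)$ with $c(m,n,k)$ the linearization coefficients, and entirety in $\lambda$ is clear). Consequently Theorem \ref{expfam} furnishes an $m_\lambda$-sine function, namely $n\mapsto \partial_\lambda \Phi(n,\lambda)=P_n'(\lambda)$. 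This already shows that $n\mapsto c\,P_n'(\lambda)$ is an $m_\lambda$-sine function for every $c\in\C$.

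Second, I would show that the space of $m_\lambda$-sine functions is at most one-dimensional. Let $f$ be any $m_\lambda$-sine function; then $f(0)=0$. Exploiting the three-term recursion that governs a polynomial hypergroup, write $P_1\cdot P_m=\alpha_m P_{m+1}+\beta_m P_m+\gamma_m P_{m-1}$ with $\alpha_m\neq 0$. Applying $f$ to the convolution $1*m$ and using the sine equation with $m_\lambda(n)=P_n(\lambda)$ yields
$$
\alpha_m f(m+1)+\beta_m f(m)+\gamma_m f(m-1)=f(m)P_1(\lambda)+f(1)P_m(\lambda).
$$
Since $\alpha_m\neq 0$, this expresses $f(m+1)$ in terms of $f(m)$, $f(m-1)$, $f(1)$ and $P_m(\lambda)$. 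Starting from $f(0)=0$, an induction on $m$ shows that $f$ is entirely determined by the single value $f(1)$. Hence the $m_\lambda$-sine functions form a vector space of dimension at most one.

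Finally, because $P_1$ has degree one, $P_1'(\lambda)\neq 0$, so $n\mapsto P_n'(\lambda)$ is not the zero function. Thus the one-dimensional space of $m_\lambda$-sine functions is exactly spanned by $n\mapsto P_n'(\lambda)$, and choosing $c:=f(1)/P_1'(\lambda)$ gives $f(n)=c\,P_n'(\lambda)$ for all $n$. Since every sine function is an $m_\lambda$-sine function for some $\lambda\in\C$, this completes the classification. The only real technical point is the appeal to the three-term recursion and the nonvanishing of the leading coefficient $\alpha_m$, which is exactly what the polynomial hypergroup structure guarantees; once this is in hand, the rest is a straightforward induction matched against the explicit solution provided by Theorem \ref{expfam}.
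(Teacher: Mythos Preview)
Your argument is correct and mirrors the paper's proof: both exploit the second-order recursion obtained from the sine equation at $k=1$ (your $\alpha_m f(m{+}1)+\beta_m f(m)+\gamma_m f(m{-}1)=f(m)P_1(\lambda)+f(1)P_m(\lambda)$ is exactly the paper's $f(n*1)-\lambda f(n)-f(1)P_n(\lambda)=0$), together with the explicit solution $g(n)=P_n'(\lambda)$ furnished by Theorem~\ref{expfam}. The only cosmetic difference is that the paper subtracts $g$ (scaled so that $g(1)=f(1)$) and invokes uniqueness for the resulting homogeneous order-two recursion with vanishing initial data, whereas you phrase the same uniqueness as a dimension count.
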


\begin{proof}
The sufficiency of the condition is obvious by the previous theorem: the function $n\mapsto c\,P_n'(\lambda)$ is clearly an $m_{\lambda}$-sine function for each complex $c,\lambda$, if $m_{\lambda}(n)=P_n(\lambda)$. Now we prove the converse statement.
\vskip.3cm

Let $\lambda$ be a complex number and let $m_{\lambda}$ denote the exponential $n\mapsto P_n(\lambda)$ on $K$, further let $f:K\to\C$ be an $m_{\lambda}$-sine function. Hence we have $f(0)=0$ and
\begin{equation}\label{0poly}
f(n*k)-f(n) P_k(\lambda)-f(k) P_n(\lambda)=0
\end{equation}
for each $k,n$ in $\N$. Substitution $k=1$ gives
\begin{equation}\label{1poly}
f(n*1)-\lambda f(n)- f(1) P_n(\lambda)=0
\end{equation}
for each $n=0,1,\dots$. Now we define the function $g(n)=f(1)\cdot P_n'(\lambda)$, then $g:K\to\C$ is an $m_{\lambda}$-sine function, by Theorem \ref{expfam}, hence it satisfies \eqref{0poly}, too:
\begin{equation*}
g(n*k)-g(n) P_k(\lambda)-g(k) P_n(\lambda)=0
\end{equation*}
for each $k,n$ in $\N$. In particular, we also have
\begin{equation}\label{2poly}
g(n*1)-\lambda g(n)- g(1) P_n(\lambda)=0
\end{equation}
for each $n=0,1,\dots$. On the other hand, $g(1)=f(1) P_1'(\lambda)=f(1)$, and, by \eqref{1poly} and \eqref{2poly}, the function $\varphi=f-g$ satisfies
\begin{equation}\label{3poly}
\varphi(n*1)-\lambda \varphi(n)=0
\end{equation}
for each $n=0,1,\dots$ with $\varphi(0)=\varphi(1)=0$. As \eqref{3poly} is a linear homogeneous difference equation of order $2$ it follows immediately that $\varphi=0$, and $f=g$ and the theorem is proved.
\end{proof}

This theorem presents a variety of sine $m$-functions which are not of the form $a\cdot m$ with some additive function $a$. Indeed, on the polynomial hypergroup $K$ every additive function has the form $n\mapsto c\cdot P_{n}'(1)$ and every exponential has the form $n\mapsto P_n(\lambda)$ (see e.g. \cite[Theorem 2.2, Theorem 2.3]{Sze12}) with some complex numbers $c,\lambda$. However, it is not true in general that $P_{n}'(\lambda)=c P_{n}'(1) P_n(\lambda)$ holds for each $n$ in $\N$ with some complex number $c$.

\section{Sine functions on the $SU(2)$ hypergroup}

\hskip.5cm In this section we describe the sine functions on the $SU(2)$ hypergroup (see e.g. \cite{BlH95, Sze12}). The method we use here is similar to that in the previous section.
\vskip.3cm

We recall that the $SU(2)$ hypergroup structure is introduced on the set $\N$ of natural numbers, where convolution of the point masses $\delta_k$ and $\delta_n$ is defined in the following manner:
$$
\delta_k*\delta_n={\sum_{l=|k-n|}^{k+n}} ' \,\frac{l+1}{(k+1) (n+1)} \delta_l
$$ 
where the prime means that only every second term appears in the sum. Here $\delta_k$ denotes the point mass concentrated at $k$ ($k=0,1,\dots$). This hypergroup has the exponential family $\Phi:K\times \C\to\C$ given by
\begin{equation}\label{sexpfam}
\Phi(n,\lambda)=\frac{\sinh [(n+1) \lambda]}{(n+1) \sinh \lambda}
\end{equation}
for each $n$ in $\N$ and $\lambda$ in $\C$ with $\Phi(n,0)=1$ (see \cite{Sze12}).
\vskip.3cm

We have the following result. 

\begin{theorem}\label{Su2}
Let $K$ be the $SU(2)$ hypergroup and for each complex number $\lambda$ let $m_{\lambda}$ denote the exponential $n\mapsto \Phi(n,\lambda)$ with $\Phi$ given in \eqref{sexpfam}. Then every $m_{\lambda}$-sine function on $K$ has the form $n\mapsto c\, \partial_2 \Phi(n,\lambda)$ with some complex \hbox{number $c$.}
\end{theorem}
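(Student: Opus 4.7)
The plan is to mirror the strategy of Theorem~\ref{polyone}: sufficiency comes directly from Theorem~\ref{expfam} applied to the one-parameter exponential family \eqref{sexpfam}, which shows that $n\mapsto c\,\partial_2\Phi(n,\lambda)$ is an $m_\lambda$-sine function for every $c\in\C$. For the converse direction I would specialise the defining equation to $k=1$, where the convolution has only two non-trivial terms, in order to reduce the functional equation to a scalar linear recurrence.

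Concretely, for $n\ge 1$ one has
\[
\delta_1*\delta_n=\frac{n}{2(n+1)}\delta_{n-1}+\frac{n+2}{2(n+1)}\delta_{n+1},
\]
and since $\Phi(1,\lambda)=\cosh\lambda$, the $m_\lambda$-sine equation evaluated at $(1,n)$ becomes
\[
\frac{n}{2(n+1)}f(n-1)+\frac{n+2}{2(n+1)}f(n+1)-\cosh\lambda\cdot f(n)=f(1)\,\Phi(n,\lambda).
\]
This is a second-order linear non-homogeneous difference equation in $n$, so its solution on $\N$ is uniquely determined by the two initial values $f(0)=0$ and $f(1)$.

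I would then choose $c\in\C$ so that $g(n):=c\,\partial_2\Phi(n,\lambda)$ satisfies $g(1)=f(1)$; since a direct computation gives $\partial_2\Phi(1,\lambda)=\sinh\lambda$, this amounts to $c=f(1)/\sinh\lambda$. By Theorem~\ref{expfam} the function $g$ is again an $m_\lambda$-sine function, hence solves the same recurrence with the same two initial values as $f$, and therefore $\varphi:=f-g$ solves the corresponding homogeneous equation with $\varphi(0)=\varphi(1)=0$. Uniqueness of solutions of the recurrence then forces $\varphi\equiv 0$, whence $f=g$, completing the proof.

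The main obstacle I expect is that this normalisation requires $\sinh\lambda\ne 0$. At the exceptional values $\lambda\in i\pi\Z$ the function $\partial_2\Phi(\cdot,\lambda)$ degenerates -- for instance $\partial_2\Phi(n,0)\equiv 0$, since expanding \eqref{sexpfam} around $\lambda=0$ shows that $\Phi(n,\lambda)$ is an even function of $\lambda$ -- so a substitute candidate must be produced before the matching-of-initial-data argument can be run. A natural remedy is to exploit that $\Phi$ is entire in $\lambda$ and to pass to the first non-vanishing derivative $\partial_2^k\Phi(\cdot,\lambda)$ at such an exceptional point, or to analyse the homogeneous recurrence directly to determine the true dimension of the space of $m_\lambda$-sine functions there.
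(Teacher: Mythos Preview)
Your approach is essentially the paper's own: specialise the sine equation to $k=1$, obtain a second--order linear recurrence in $n$, and match initial data against the reference solution $\partial_2\Phi(\cdot,\lambda)$. The paper adds one cosmetic simplification you omit---it substitutes $g(n)=(n+1)f(n)$ to convert the recurrence to constant coefficients
\[
g(n+2)-2\cosh\lambda\,g(n+1)+g(n)=2f(1)(n+2)m_\lambda(n+1),
\]
and then forms the combination $\psi=a\,g-f(1)\,\varphi$ (with $a=\partial_2\Phi(1,\lambda)$ and $\varphi(n)=(n+1)\partial_2\Phi(n,\lambda)$) rather than dividing by $a$ up front. But this only postpones the division: the paper concludes by asserting ``it is easy to check that $a\ne 0$'' and then recovers $f$ as a multiple of $\partial_2\Phi(\cdot,\lambda)$.

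Your worry about $\sinh\lambda=0$ is therefore exactly on target, and the paper does \emph{not} address it: one has $a=\partial_2\Phi(1,\lambda)=\sinh\lambda$, which vanishes precisely for $\lambda\in i\pi\Z$. At $\lambda=0$ your observation that $\Phi(n,\cdot)$ is even forces $\partial_2\Phi(n,0)\equiv 0$, yet the recurrence with $f(0)=0$, $f(1)=a$ produces the nonzero additive function $f(n)=\tfrac{a}{3}n(n+2)$, which is in fact $a\,\partial_2^{\,2}\Phi(n,0)$. So at these exceptional parameters the statement as written fails literally, and your proposed remedy---passing to the first non-vanishing $\lambda$-derivative of $\Phi$---is the correct fix. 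In short: your argument is the paper's argument, and the obstacle you flag is a genuine gap that the paper's proof shares.
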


\begin{proof}
By Theorem \eqref{expfam}, every function of the given form is an $m_{\lambda}$-sine function. For the converse, by the definition of the convolution in $K$ we have that the $m_{\lambda}$-sine function $f:K\to\C$ satisfies
\begin{equation*}
f(n*1)={\sum_{l=n-1}^{n+1}}'\, \frac{l+1}{2 (n+1)}\,f(l)=\frac{n}{2 (n+1)}\,f(n-1)+\frac{n+2}{2 (n+1)}\,f(n+1)=
\end{equation*}
$$
f(n) m_{\lambda}(1)+f(1) m_{\lambda}(n)
$$
for each $n=1,2,\dots$. We can write this equation in the following form
\begin{equation}\label{Su21}
(n+3) f(n+2)- 2(n+2) \cosh \lambda f(n+1)+(n+1) f(n)= 2 f(1) (n+2) m_{\lambda}(n+1)
\end{equation}
for $n=0,1,\dots$. We introduce $g(n)=(n+1) f(n)$, then we have
\begin{equation}\label{Su22}
g(n+2)- 2\cosh \lambda g(n+1)+ g(n)= 2 f(1) (n+2) m_{\lambda}(n+1)\,.
\end{equation}
By Theorem \eqref{expfam}, the function $n\mapsto \partial_2 \Phi(n,\lambda)$ is an $m_{\lambda}$-sine function, hence it also satisfies \eqref{Su21} and the function $\varphi(n)= (n+1) \partial_2 \Phi(n,\lambda)$ satisfies \eqref{Su22}:
\begin{equation}\label{Su23}
\varphi (n+2)- 2\cosh \lambda \varphi (n+1)+ \varphi (n)= 2 a (n+2) m_{\lambda}(n+1)\,,
\end{equation}
where $a=\partial_2 \Phi(1,\lambda)$. Multiplying \eqref{Su22} by $a$ and \eqref{Su23} by $f(1)$, then subtracting the two equations we have that the function $\psi=a g-f(1) \varphi$ satisfies
\begin{equation}\label{Su24}
\psi (n+2)- 2\cosh \lambda \psi (n+1)+ \psi (n)= 0
\end{equation}
for $n=0,1,\dots$. On the other hand, we have $\psi(0)=a g(0)-f(1) \varphi(0)=0$, and 
$$
\psi(1)=a g(1)-f(1) \varphi(1)=2 a f(1)-2 f(1) a=0\,,
$$
hence $\psi=0$. As it is easy to check that $a\ne 0$ our statement follows. 
\end{proof}

\section{Sine functions on polynomial hypergroups in several variables}

\hskip.5cm In this section we consider the case of polynomial hypergroups in several variables. It is known (see e.g. \cite{Sze12}) that if $K$ is a $d$-dimensional polynomial hypergroup generated by the family of polynomials $(Q_x)_{x\in K}$, then $K$ has the exponential family $\Phi:K\times \C^d\to\C$ with
$$
\Phi(x,\lambda)=Q_x(\lambda)
$$
for each $x$ in $K$ and $\lambda$ in $\C^n$ (see \cite[Theorem 3.1]{Sze12}). Based on this result we have the following theorem.

\begin{theorem}\label{sinsev}
Let $K$ be a $d$-dimensional polynomial hypergroup generated by the family of polynomials $(Q_x)_{x\in K}$ and for each $\lambda$ in $\C^d$ let $m_{\lambda}$ denote the exponential $x\mapsto Q_x(\lambda)$ on $K$. The function $f:K\to\C$ is an $m_{\lambda}$-sine function on $K$ if and only if there are complex numbers $c_j$, $j=1,2,\dots,d$ such that
$$
f(x)=\sum_{j=1}^d c_j \,\partial_j Q_x(\lambda)
$$
holds for each $x$ in $K$.
\end{theorem}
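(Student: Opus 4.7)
The sufficiency (``if'' direction) is immediate from Theorem~\ref{expfam}: each of the functions $x\mapsto \partial_j Q_x(\lambda)$ is an $m_\lambda$-sine function, and since the set of $m_\lambda$-sine functions is a complex linear space, every linear combination $\sum_j c_j\,\partial_j Q_x(\lambda)$ is also an $m_\lambda$-sine function. I would dispose of this direction in a single sentence and then concentrate on the converse.

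For the necessity, the plan is to imitate the strategy used in the proofs of Theorem~\ref{polyone} and Theorem~\ref{Su2}: build a candidate $g$ of the prescribed form that matches a given $m_\lambda$-sine function $f$ on a finite set of ``initial'' elements of $K$, and then argue by a uniqueness principle that $\varphi=f-g$ vanishes identically on $K$. Concretely, a $d$-dimensional polynomial hypergroup comes equipped with $d$ distinguished generators $e_1,\dots,e_d\in K$ whose images under $\Phi(\,\cdot\,,\lambda)$ essentially generate all other $Q_x(\lambda)$'s by the convolution relations $\delta_x * \delta_{e_j}$, and for which the partial derivatives satisfy a normalisation of the form $\partial_k Q_{e_j}(\lambda_0)=\delta_{jk}$ at a suitable reference point (this is the standard setup recalled in \cite{Sze12}). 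Choosing the coefficients $c_j$ so that $g(x)=\sum_{j=1}^d c_j\,\partial_j Q_x(\lambda)$ satisfies $g(e_j)=f(e_j)$ for $j=1,\dots,d$ is a $d\times d$ linear system whose invertibility is guaranteed by this normalisation.

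Once $\varphi=f-g$ is defined, it is again an $m_\lambda$-sine function (since $m_\lambda$-sine functions form a linear space) and it satisfies $\varphi(o)=0$ together with $\varphi(e_j)=0$ for every $j$. Specialising the sine equation to $y=e_j$ gives
$$
\varphi(x*e_j)=\varphi(x)\,m_\lambda(e_j)+\varphi(e_j)\,m_\lambda(x)=\varphi(x)\,m_\lambda(e_j),
$$
and the polynomial-hypergroup structure guarantees that $\delta_x * \delta_{e_j}$ has a unique ``leading'' element $x^{(j)}$ in an appropriate $\N^d$-grading, all other terms in the support being of strictly lower height. The above identity can therefore be solved for $\varphi(x^{(j)})$ in terms of values of $\varphi$ at elements of lower height. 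An induction on this height, starting from the initial data $\varphi(o)=\varphi(e_1)=\dots=\varphi(e_d)=0$, then yields $\varphi\equiv 0$, proving $f=g$.

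The main obstacle I anticipate is the bookkeeping for the multivariate recursion: one must verify that the $\N^d$-type grading on $K$ really does make $\delta_x * \delta_{e_j}$ triangular with an invertible leading coefficient, so that the induction step is well-posed. In one variable this is trivial (the recursion is second order and the support of $\delta_n*\delta_1$ is $\{n-1,n+1\}$), and in the $SU(2)$ case it is essentially the same second-order recursion; in the general $d$-dimensional polynomial case the corresponding statement is known from the theory of polynomial hypergroups in several variables as recalled in \cite{Sze12}, but it is the step where care is needed to make the argument rigorous. Granted this triangular structure, the rest of the proof is a direct transcription of the scheme already used for Theorem~\ref{polyone} and Theorem~\ref{Su2}.
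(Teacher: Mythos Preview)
Your overall scheme is the same as the paper's: dispose of sufficiency via Theorem~\ref{expfam}, use the degree-$1$ elements of $K$ to pin down the coefficients $c_1,\dots,c_d$ (the paper phrases this as the linear independence of the gradient vectors $(\partial_1 Q_x(\lambda),\dots,\partial_d Q_x(\lambda))$ for $x\ne o$ in $K_1$), and then propagate by induction on degree.

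Where you differ is in the mechanics of the induction step. You want to run a forward recursion: from $\varphi(x*e_j)=\varphi(x)\,m_\lambda(e_j)$, solve for $\varphi$ at the leading element of $\delta_x*\delta_{e_j}$ in terms of lower-height values. As you correctly flag, this needs a triangular structure with an invertible leading coefficient, and also implicitly that every element of height $n+1$ arises as such a leading term. The paper instead runs the induction in the opposite direction: for $x\in K_{n+1}$ it writes $Q_x=\sum_l a_l\,Q_{x_l}Q_{y_l}$ with $x_l\in K_1$, $y_l\in K_n$ (equivalently $\delta_x=\sum_l a_l\,\delta_{x_l}*\delta_{y_l}$), and then computes $f(x)=\sum_l a_l f(x_l*y_l)$ directly from the sine equation and the induction hypothesis. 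This ``decompose $x$ into products'' formulation is purely linear-algebraic (it only uses that $(Q_x)_{x\in K_n}$ spans degree-$\le n$ polynomials) and sidesteps entirely the leading-coefficient and surjectivity issues you worried about. Your route is workable in concrete examples, but the paper's version of the induction is the cleaner and more robust one.
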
 

\begin{proof}
By Theorem \ref{expfam}, every function $f$ of the given form is an $m_{\lambda}$-sine function on $K$.\vskip.3cm

To prove the converse first we note that for each $n$ in $\N$ we denote by $K_n$ the set of all polynomials of degree at most $n$ in $K$. By the definition of the hypergroup, the family of polynomials $(Q_x)_{x\in K_n}$ is a basis of $K_n$. In particular, the family of polynomials $Q_x$ with $x$ in $K_1$ form a basis of the linear polynomials in $K$. It follows that the vectors
$$
\bigl(\partial_1 Q_x(\lambda),\partial_2 Q_x(\lambda),\dots,\partial_d Q_x(\lambda)\bigr)
$$
for $x\ne $ in $K_1$ are linearly independent. This implies that the system of linear equations
$$
f(x)=\sum_{j=1}^d c_j \partial_j Q_x(\lambda)
$$
with $x\ne o$ in $K_1$ has a unique solution $c_1,c_2,\dots,c_d$. Then, clearly, this holds also for $x=o$. We prove by induction on $n$ that this equation holds for each $x$ in $K_n$. Assuming that it holds for $n$ let $x$ be in $K_{n+1}$. Then there are elements $x_l$ in $K_1$ and $y_l$ in $K_n$ for $l=1,2,\dots,s$ such that
\begin{equation}\label{part}
Q_x(\xi)=\sum_{l=1}^s a_l\, Q_{x_l}(\xi)\,Q_{y_l}(\xi)
\end{equation}
holds for each $\xi$ in $\C^d$ with some complex numbers $a_l$, $l=1,2,\dots,s$. By the definition of the convolution in $K$, this implies
$$
\delta_x=\sum_{l=1}^s a_l\, \delta_{x_l}*\delta_{y_l}\,.
$$
On the other hand, applying $\partial_j$ on equation \eqref{part} and then substituting $\xi=\lambda$ we obtain
$$
\partial_j Q_x(\xi)=\sum_{l=1}^s a_l\,[\partial_j Q_{x_l}(\xi)\,Q_{y_l}(\xi)+Q_{x_l}(\xi)\,\partial_j Q_{y_l}(\xi)]\,.
$$
Combining these results we conclude
$$
f(x)=\int_K f\,d \delta_x=\sum_{l=1}^s a_l\,\int_K f d(\delta_{x_l}*\delta_{y_l})=
$$
$$
\sum_{l=1}^s a_l\,f (x_l*y_l)=\sum_{l=1}^s a_l [f(x_l) m_(y_l)+f(y_l) m(x_l)]=
$$
$$
\sum_{l=1}^s a_l \sum_{j=1}^d c_j [\partial_j Q_{x_l}(\lambda) Q_{y_l}(\lambda)+ Q_{x_l}(\lambda) \partial_j Q_{y_l}(\lambda)]=
$$
$$
\sum_{j=1}^d c_j \sum_{l=1}^s a_l [\partial_j Q_{x_l}(\lambda) Q_{y_l}(\lambda)+ Q_{x_l}(\lambda) \partial_j Q_{y_l}(\lambda)]=\sum_{j=1}^d c_j\,\partial_j Q_x(\lambda)\,,
$$
which proves our theorem.
\end{proof}

\section{Sine functions on Sturm--Liouville \\hypergroups}

\hskip.5cm In this section we shall study sine functions on Sturm--Liouville hypergroups. For basics on Sturm--Liouville hypergroups the reader should consult with \hbox{\cite{BlH95, Sze12}.} We recall that if $K$ is a Sturm--Liouville hypergroup with the Sturm--Liouville function $A$, then there exists an exponential family $\Phi:K\times C\to \C$ satisfying
\begin{equation}\label{Sturm1}
\partial_1^2\Phi(x,\lambda)+\frac{A'(x)}{A(x)} \,\partial_1\Phi(x,\lambda)=\lambda\,\Phi(x,\lambda)
\end{equation}
for each $x>0$ and $\lambda$ in $\C$ with the initial conditions $\Phi(0,\lambda)=1$, $\partial_1\Phi(0,\lambda)=0$. Our next results characterizes sine functions on Sturm--Liouville hypergroups in terms of an initial value problem.

\begin{theorem}\label{Sturmsine1}
Let $K$ be a Sturm--Liouville hypergroup with the Sturm--Liouville function $A$ and let $\Phi:K\times \C\to\C$ be its exponential family. For each complex number $\lambda$ let $m_{\lambda}$ denote the exponential $x\mapsto \Phi(x,\lambda)$. The  function $f:K\to\C$ is an $m_{\lambda}$-sine function on $K$ if and only if it is twice continuously differentiable for $x>0$ and there exists a complex number $c$ such that $f$ satisfies
\begin{equation}\label{Sturm2}
f''(x)+\frac{A'(x)}{A(x)} \,f'(x)=\lambda\,f(x)+c\,\Phi(x,\lambda)
\end{equation}
for $x>0$, with the initial conditions $f(0)=0$, $f'(0)=0$.
\end{theorem}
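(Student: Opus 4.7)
The plan is to reduce the sine functional equation to the linear second-order ODE \eqref{Sturm2} by exploiting the differential structure of Sturm--Liouville hypergroups, and then close the argument via uniqueness of the Cauchy problem for that ODE.

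\emph{Sufficiency.} I would first differentiate \eqref{Sturm1} with respect to $\lambda$, obtaining
$$
\partial_1^2\bigl[\partial_2\Phi(x,\lambda)\bigr] + \frac{A'(x)}{A(x)}\partial_1\bigl[\partial_2\Phi(x,\lambda)\bigr] = \Phi(x,\lambda)+\lambda\,\partial_2\Phi(x,\lambda),
$$
and note that differentiating the initial data gives $\partial_2\Phi(0,\lambda)=0$ and $\partial_1\partial_2\Phi(0,\lambda)=0$. Hence $h(x)=c\,\partial_2\Phi(x,\lambda)$ solves \eqref{Sturm2} with the prescribed initial values, and by Theorem \ref{expfam} it is an $m_{\lambda}$-sine function. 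Since \eqref{Sturm2} is linear of second order, its Cauchy problem at $x=0$ has a unique solution; any $f$ satisfying the stated conditions must therefore coincide with $h$, and so is an $m_{\lambda}$-sine function.

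\emph{Necessity.} Here I would invoke the characterising property of Sturm--Liouville hypergroups: for sufficiently smooth $u\colon K\to\C$, the function $U(x,y)=\int u\,d(\delta_x*\delta_y)$ is the unique solution of the symmetric hyperbolic PDE
$$
\partial_1^2 U(x,y) + \frac{A'(x)}{A(x)}\partial_1 U(x,y) \;=\; \partial_2^2 U(x,y) + \frac{A'(y)}{A(y)}\partial_2 U(x,y),
$$
with initial data $U(x,0)=u(x)$ and $\partial_2 U(x,0)=0$. Applying this with $U(x,y)=f(x*y)=f(x)m_{\lambda}(y)+f(y)m_{\lambda}(x)$ and writing $Lu = u''+\tfrac{A'}{A}u'$, and using that $Lm_{\lambda}=\lambda m_{\lambda}$, the PDE collapses after cancellation to
$$
\bigl[Lf(x)-\lambda f(x)\bigr]\,m_{\lambda}(y) = \bigl[Lf(y)-\lambda f(y)\bigr]\,m_{\lambda}(x).
$$
Choosing some $y_0$ with $m_{\lambda}(y_0)\neq 0$ (possible since $m_{\lambda}$ is an exponential) and setting $c=\bigl[Lf(y_0)-\lambda f(y_0)\bigr]/m_{\lambda}(y_0)$ yields \eqref{Sturm2}. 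The value $f(0)=0$ is automatic for every sine function, while $f'(0)=0$ falls out by differentiating the sine equation in $y$ at $y=0$: the right-hand side becomes $f(x)m_{\lambda}'(0)+f'(0)m_{\lambda}(x)=f'(0)m_{\lambda}(x)$ because $\partial_1\Phi(0,\lambda)=0$, whereas the left-hand side is $\partial_2 U(x,0)=0$ by the hypergroup's initial condition; since $m_{\lambda}$ does not vanish identically, $f'(0)=0$.

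The main obstacle will be the regularity step inside the necessity direction: the sine equation is imposed only for continuous $f$, so I need to justify that $f$ is $C^2$ on $(0,\infty)$ and $C^1$ from the right at $0$ before I am allowed to apply $L$ to $f$ or the PDE to $U(x,y)$. The standard route is to transfer smoothness from the exponential family $\Phi$, which is $C^{\infty}$ in $x$ by construction, to $f$ through the sine equation itself together with the smoothing effect of the convolution measures $\delta_x*\delta_y$. Once the required regularity is in hand, the PDE reduction and the ODE-uniqueness argument run as described.
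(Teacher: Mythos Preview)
Your necessity argument is essentially the paper's: both substitute the sine identity $f(x*y)=f(x)\Phi(y,\lambda)+f(y)\Phi(x,\lambda)$ into the defining PDE of the Sturm--Liouville hypergroup, use $L\,m_\lambda=\lambda m_\lambda$ to obtain the separated relation $[Lf(x)-\lambda f(x)]\,m_\lambda(y)=[Lf(y)-\lambda f(y)]\,m_\lambda(x)$, and read off the constant $c$; both derive $f'(0)=0$ from the boundary condition $\partial_2 u_f(x,0)=0$. The paper is slightly terser about regularity, simply invoking that $u_f(x,y)=f(x*y)$ is $C^2$ for $x,y>0$ by the definition of a Sturm--Liouville hypergroup; your explicit worry about this step is well placed, but the resolution is exactly the smoothing-through-convolution argument you sketch, together with the observation that once $u_f$ is $C^2$ one recovers $f\in C^2$ from $f(x)=[u_f(x,y_0)-f(y_0)\Phi(x,\lambda)]/\Phi(y_0,\lambda)$ for a fixed $y_0$ with $\Phi(y_0,\lambda)\neq 0$.

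Your sufficiency argument, however, takes a genuinely different route. You differentiate \eqref{Sturm1} in $\lambda$ to see that $c\,\partial_2\Phi(\cdot,\lambda)$ solves the Cauchy problem \eqref{Sturm2}, then invoke \emph{ODE} uniqueness at the (singular) initial point $x=0$ to force $f=c\,\partial_2\Phi(\cdot,\lambda)$, and finally quote Theorem~\ref{expfam}. In effect you prove Theorem~\ref{Sturmsine2} first and deduce sufficiency in Theorem~\ref{Sturmsine1} as a corollary. The paper instead stays at the \emph{PDE} level: it sets $u(x,y)=f(x)\Phi(y,\lambda)+f(y)\Phi(x,\lambda)$, checks directly from \eqref{Sturm2} that $u$ satisfies the same hyperbolic Cauchy problem as $u_f(x,y)=f(x*y)$, and concludes $u=u_f$ by the uniqueness built into the hypergroup definition. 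Your route is shorter and yields the explicit form of $f$ for free, but it rests on uniqueness for the singular second-order ODE at $x=0$---a fact the paper also uses, though only later in the proof of Theorem~\ref{Sturmsine2}. The paper's PDE argument keeps Theorem~\ref{Sturmsine1} logically independent of that subsequent result and relies only on the PDE-uniqueness that is part of the very definition of the hypergroup structure.
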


\begin{proof}
First we assume that $f:K\to\C$ is an $m_{\lambda}$-sine function on $K$, then it satisfies $f(0)=0$ and 
\begin{equation}\label{Sturm3}
f(x*y)=f(x) \Phi(y,\lambda)+f(y) \Phi(x,\lambda)
\end{equation}
for each $x,y\geq 0$. By the definition of the Sturm--Liouville hypergroup the function $u_f$ defined by $u_f(x,y)=f(x*y)$ is twice continuously differentiable for $x,y>0$ and it satisfies 
\begin{equation}\label{Sturmdef}
\partial_1^2 u_f(x,y)+\frac{A'(x)}{A(x)}\,\partial_1 u_f(x,y)=\partial_2^2 u_f(x,y)+\frac{A'(y)}{A(y)}\,\partial_2 u_f(x,y)
\end{equation}
further $\partial_2 u_f(x,0)=0$. From the general properties of hypergroups it follows also $u_f(x,0)=u_f(0,x)=f(x)$ and $\partial_1 u_f(0,y)=0$. By \eqref{Sturm3} and \eqref{Sturmdef} this gives
$$
\Phi(x,\lambda)\Bigl[f''(y)+\frac{A'(y)}{A(y)}\,f'(y)-\lambda f(y)\Bigr]=
\Phi(y,\lambda)\Bigl[f''(x)+\frac{A'(x)}{A(x)}\,f'(x)-\lambda f(x)\Bigr]
$$
for each $x,y>0$. As $\Phi$ is not identically zero we conclude that \eqref{Sturm2} holds for each $x>0$. The condition $f'(0)=0$ is the consequence of $\partial_2 u_f(x,0)=0$.
\vskip.3cm

Conversely, suppose that $f:K\to\C$ is twice continuously differentiable for $x>0$ and satisfies \eqref{Sturm2} with the given initial conditions. The function $u_f$ defined above obviously satisfies the conditions given in \eqref{Sturmdef}. If we define
$$
u(x,y)=f(x) \Phi(y,\lambda)+f(y) \Phi(x,\lambda)
$$
for $x,y>0$ and $u(x,0)=u(0,x)=f(x)$, then, by \eqref{Sturm2}, it follows that also $u$ satisfies the same conditions given in \eqref{Sturmdef}. By uniqueness, we conclude that $u=u_f$, hence $f$ is an $m_{\lambda}$-sine function. 
\end{proof}

The description of sine functions on Sturm--Liouville hypergroups follows.

\begin{theorem}\label{Sturmsine2}
Let $K$ be a Sturm--Liouville hypergroup with exponential family $\Phi:K\times \C\to\C$ and for each $\lambda$ complex number let $m_{\lambda}$ denote the exponential $x\mapsto \Phi(x,\lambda)$. The function $f:K\to\C$ is an $m_{\lambda}$-sine function on $K$ if and only if there is a complex number $a$ such that 
$$
f(x)=a\,\partial_2 \Phi(x,\lambda)
$$
holds for each $x$ in $K$.
\end{theorem}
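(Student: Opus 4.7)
The plan is to reduce the problem, via Theorem \ref{Sturmsine1}, to an initial value problem for the Sturm--Liouville ODE and then use linearity plus uniqueness. The sufficiency direction is immediate from Theorem \ref{expfam}, which guarantees that $x\mapsto a\,\partial_2\Phi(x,\lambda)$ is an $m_\lambda$-sine function for every $a\in\C$. For necessity, I would take an $m_\lambda$-sine function $f$ and invoke Theorem \ref{Sturmsine1} to produce a constant $c\in\C$ such that
$$
f''(x)+\frac{A'(x)}{A(x)}\,f'(x)=\lambda\,f(x)+c\,\Phi(x,\lambda), \qquad f(0)=f'(0)=0.
$$
The goal then becomes showing that the only solution of this problem is $f=c\,\partial_2\Phi(\cdot,\lambda)$, which would give the result with $a=c$.

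The second step is to verify that the candidate $h(x):=\partial_2\Phi(x,\lambda)$ solves precisely this Cauchy problem with the inhomogeneity coefficient normalized to $1$. Differentiating \eqref{Sturm1} with respect to $\lambda$ (and interchanging partial derivatives, which is legitimate because $\Phi$ is entire in $\lambda$ and twice continuously differentiable in $x$) yields
$$
\partial_1^2\partial_2\Phi(x,\lambda)+\frac{A'(x)}{A(x)}\,\partial_1\partial_2\Phi(x,\lambda)=\Phi(x,\lambda)+\lambda\,\partial_2\Phi(x,\lambda).
$$
Moreover, the identities $\Phi(0,\lambda)=1$ and $\partial_1\Phi(0,\lambda)=0$ hold for every $\lambda$, so differentiating in $\lambda$ gives $\partial_2\Phi(0,\lambda)=0$ and $\partial_1\partial_2\Phi(0,\lambda)=0$. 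Thus $h$ satisfies the ODE with $c=1$ and the correct initial conditions, and by linearity the function $c\cdot h$ satisfies the Cauchy problem above.

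The third step is to run the uniqueness argument. Setting $g=f-c\,\partial_2\Phi(\cdot,\lambda)$, the function $g$ is a $C^2$ solution on $(0,\infty)$ of the homogeneous Sturm--Liouville equation
$$
g''(x)+\frac{A'(x)}{A(x)}\,g'(x)=\lambda\,g(x),
$$
with $g(0)=g'(0)=0$, and I want to conclude $g\equiv 0$. The cleanest way is to reuse the uniqueness built into the proof of Theorem \ref{Sturmsine1}: define $v(x,y)=g(x)\Phi(y,\lambda)+g(y)\Phi(x,\lambda)$ for $x,y>0$ and extend it by $v(x,0)=v(0,x)=g(x)$; the homogeneous ODE for $g$ together with \eqref{Sturm1} shows that $v$ satisfies \eqref{Sturmdef} and $\partial_2 v(x,0)=g'(0)\Phi(x,\lambda)=0$, so $v=u_g$. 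But the zero function also satisfies \eqref{Sturmdef} with the initial data $u(x,0)=g(x)$ and $\partial_2 u(x,0)=0$ only when $g\equiv 0$, so taking $y=0$ in $v=u_g$ and using this uniqueness forces $g=0$.

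The main obstacle I anticipate is precisely this last uniqueness step, because the coefficient $A'/A$ is typically singular at $x=0$ (one has $A(0)=0$ in the Sturm--Liouville hypergroup setting), so one cannot quote Picard--Lindelöf directly at the endpoint. The route above circumvents this by transporting the problem to the two-variable PDE \eqref{Sturmdef}, where the relevant uniqueness is exactly what defines a Sturm--Liouville hypergroup and has already been exploited in the preceding theorem; if one prefers an ODE-level argument, a Frobenius analysis at the regular singular point $0$ identifies $\Phi(\cdot,\lambda)$ as the unique (up to scalar) $C^2$ solution of the homogeneous equation near $0$, and the condition $g(0)=0$ together with $\Phi(0,\lambda)=1$ again forces $g\equiv 0$.
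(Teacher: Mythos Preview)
Your approach is essentially the paper's: reduce via Theorem \ref{Sturmsine1} to the inhomogeneous Sturm--Liouville ODE, identify $\partial_2\Phi(\cdot,\lambda)$ as a particular solution, and eliminate the difference by uniqueness for the homogeneous Cauchy problem. The only variation is that the paper does not compute the inhomogeneity constant for $\partial_2\Phi$ directly; it instead invokes Theorems \ref{expfam} and \ref{Sturmsine1} to obtain \emph{some} constant $d$, argues $d\neq 0$ (else $\partial_2\Phi\equiv 0$, which is not the case), and concludes $f=(c/d)\,\partial_2\Phi$. Your differentiation of \eqref{Sturm1} in $\lambda$ pins down $d=1$, which is slightly cleaner and bypasses the need to argue $\partial_2\Phi\not\equiv 0$.

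One caution on your third step: the PDE detour is circular as written. Showing $v=u_g$ only recovers that $g$ is an $m_\lambda$-sine function (which you already knew), and the sentence ``the zero function also satisfies \eqref{Sturmdef} with the initial data $u(x,0)=g(x)$ \dots\ only when $g\equiv 0$'' assumes the conclusion. The paper handles this point by simply writing ``by uniqueness'' for the homogeneous ODE with zero Cauchy data; your Frobenius remark (that $\Phi(\cdot,\lambda)$ spans the $C^2$ solutions near the singular endpoint, so $g(0)=0$ forces $g\equiv 0$) is the honest justification behind that phrase and is the part you should keep.
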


\begin{proof}
By Theorem \ref{expfam}, every function of the given form is an $m_{\lambda}$-sine function.
\vskip.3cm

To prove the converse suppose that $f:K\to\C$ is an $m_{\lambda}$-sine function. Then, by the previous theorem, $f$ is twice differentiable and it satisfies the differential equation \eqref{Sturm2} with some complex number $c$ together with the given initial conditions. On the other hand, by Theorem \ref{expfam}, the function $g:K\to\C$ defined by
$$
g(x)=\partial_2 \Phi(x,\lambda)
$$
for each $x$ in $K$ also satisfies the same equation with some other complex number $d$ and with the same initial conditions. In other words, we have
\begin{equation}\label{Sturm2a}
f''(x)+\frac{A'(x)}{A(x)} \,f'(x)=\lambda\,f(x)+c\,\Phi(x,\lambda)
\end{equation}
and
\begin{equation}\label{Sturm2b}
g''(x)+\frac{A'(x)}{A(x)} \,g'(x)=\lambda\,g(x)+d\,\Phi(x,\lambda)
\end{equation}
for each $x>0$. Obviously, $d\ne0$, otherwise $g=0$, which is not the case. Multiplying \eqref{Sturm2a} by $d$ and \eqref{Sturm2b} by $c$, then subtracting the two equations we obtain
\begin{equation}\label{Sturm2c}
(d f-c g)''(x)+\frac{A'(x)}{A(x)} \,(d f-c g)'(x)=\lambda\,(d f-c g)(x)
\end{equation}
for each $x>0$. Further we have that $(d f-c g)(0)=(d f-c g)'(0)=0$. By uniqueness we have $d f=c g$, which gives the statement with $a=\frac{c}{d}$.
\end{proof}

Similarly to Theorem \ref{polyone} this result exhibits sine functions which are not of the form of a product of an additive and an exponential function. 

\section{Sine functions on a non-commutative\\ hypergroup}

\hskip.5cm In this section we describe sine functions on a non-commutative hypergroup. We consider the multiplicative group of matrices of the form 
$$
\Bigl(
\begin{array}{cc}
x & u \\ 
0 & 1
\end{array} 
\Bigr)
$$
where $x,u$ are real numbers, $x\ne 0$. All these matrices form a subgroup of $GL(2)$ which can be identified with a subset of $\R^2$ and it is a locally compact topological group $G$ when equipped with the topology inherited from $\R^2$. As we have
$$
\Bigl(
\begin{array}{cc}
x & u \\ 
0 & 1
\end{array} 
\Bigr) \cdot \Bigl(
\begin{array}{cc}
y & v \\ 
0 & 1
\end{array} 
\Bigr) = \Bigl(
\begin{array}{cc}
x y & x u+v \\ 
0 & 1
\end{array} 
\Bigr)
$$
we can describe the group operation on the set $G=\{(x,u):\, x,u\in \R, x\ne 0\}$ in the following way:
$$
(x,u) \cdot (y,v)= (x y, x v+u)\,.
$$
Obviously, this group is non-commutative. The identity is $(1,0)$, and the inverse of $(x,u)$ is 
$$
(x,u)^{-1}=(\frac{1}{x}, -\frac{u}{x})\,.
$$
For more about this group see e.g. \cite{HeR79}, p.~201.
\vskip.3cm

Let $K$ be a compact subgroup of $G$ and let $\omega$ denote the Haar measure on $K$ -- as $K$ is unimodular, hence $\omega$ is both left and right invariant, further it is inversion invariant. We shall consider the {\it double cosets} of $K$ which are defined as the sets $K (x,u) K=\{ k (x,u) l:\, k,l\in K\}$ for each $(x,u)$ in $G$. 
\vskip.3cm

We introduce a hypergroup structure on the set $L=G/ /K$ of all double cosets in the following way. The topology of $L$ is the factor topology, which is locally compact. The identity is the coset $K$ itself and the involution is defined by
$$
(K (x,u) K)^{\vee}=K (x,u)^{-1} K\,.
$$
Finally, the convolution of $\delta_{K (x,u) K}$ and $\delta_{K (y,v) K}$ is defined by
$$
\delta_{K (x,u) K}*\delta_{K (y,v) K}=\int_K \delta_{K (x,u) k (y,v) K}\,d\omega(k)\,.
$$
It is known that this gives a hypergroup structure on $L$ (see \cite{BlH95}, p.~12.), which is non-commutative, in general. If $K$ is a normal subgroup, then $L$ is isomorphic to the hypergroup arising from the factor group $G/K$. But now we choose a non-normal subgroup $K$ in the following way: let $K$ be the two-element set $\{(1,0), (-1,0)\}$. Here we have $(-1,0)\cdot (-1,0)=(1,0)$, hence $K$ is a compact subgroup of $G$. On the other hand, we have
$$
(x,u)\cdot (-1,0)\cdot (x,u)^{-1}= (-1,2u)
$$
for each $(x,u)$ in $G$, which shows that $K$ is not a normal subgroup. 
\vskip.3cm

It is easy to see that the element $(y,v)$ is in the double coset $K (x,u) K$ if and only if it is either of the following elements: $(x,u)$, $(-x,u)$, $(x,-u)$, $(-x,-u)$. Further, by the definition of convolution on $L$, for every continuous function $f:L\to\C$ we have 
\begin{equation*}
f\Bigl[ K (x,u) K* K (y,v) K\Bigr]=\frac{1}{2} f\Bigl[K (x y, x v+u) K\Bigr]+\frac{1}{2} f\Bigl[K (-x y, -x v+u) K\Bigr]\,.
\end{equation*} 
We note that complex valued functions on $L$ can be identified with those complex valued functions on $G$ satisfying the {\it compatibility conditions}
\begin{equation}\label{comp}
f(x,u)=f(-x,u)=f(x,-u)=f(-x,-u)
\end{equation}
for each $(x,u)$ in $G$. Using this identification the above equation can be written -- in a somewhat loose way -- in the form
\begin{equation}\label{convL}
f\bigl[(x,u) *  (y,v) \bigr]=\frac{1}{2} f (x y, x v+u)+\frac{1}{2} f (-x y, -x v+u)\,.
\end{equation}
It follows that the exponentials on $L$ can be identified with those non-identically zero continuous functions $m:G\to \C$ satisfying the compatibility conditions \eqref{comp}
\begin{equation*}
m(x,u)=m(-x,u)=m(x,-u)=m(-x,-u)
\end{equation*}
further 
\begin{equation*}
m(x y, x v+u)+m (-x y, -x v+u)=2 m(x,u) m(y,v)
\end{equation*}
for each $(x,u), (y,v)$ in $G$. By the compatibility conditions \eqref{comp} the latter equation can be written as 
\begin{equation}\label{expL}
m(x y, x v+u)+m (x y, x v-u)=2 m(x,u) m(y,v)\,.
\end{equation}
Putting $y=1$ and $v=0$ we obtain that $m(1,0)=1$ and with $u=v=0$ we have 
$$
m(x y,0)=m(x,0)\, m(y,0)
$$
for each nonzero real $x,y$. It follows $m(x,0)=|x|^{\lambda}$ with some complex number $\lambda$. On the other hand, putting $x=y=1$ in \eqref{expL} we have
$$
m(1,u+v)+m(1,u-v)=2m(1,u) m(1,v)\,,
$$
which is d'Alembert's functional equation implying that 
$$
m(1,u)=\cosh \alpha u
$$
with some complex $\alpha$. Finally, let $y=\frac{1}{x}$ and $v=0$ in \eqref{expL}, then we have
$$
m(1,u)=m(x,u) \,m(\frac{1}{x},0)\,.
$$
Combining our results we get
$$
m(x,u)=|x|^{\lambda} \,\cosh \alpha u\,,
$$
and substitution into \eqref{expL} gives $\alpha=0$. We have proved the following theorem.

\begin{theorem}\label{sinecoset}
Every exponential function on the double coset hypergroup $G/ /K$ has the form
\begin{equation}\label{doucos}
m(x,u)=|x|^{\lambda}
\end{equation}
where $\lambda$ is an arbitrary complex number, and conversely, every function of this form is an exponential on $G/ /K$. Here $m(x,u)$ denotes the value of the exponential $m$ on the double coset of the element $(x,u)$ in $G$.
\end{theorem}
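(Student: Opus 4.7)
The plan is to analyse the exponential equation \eqref{expL} together with the compatibility conditions \eqref{comp} via a sequence of strategic substitutions that progressively pin down the form of $m$, and then to verify that the resulting formula genuinely defines an exponential on $G//K$.

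First, I would substitute $y=1$, $v=0$ into \eqref{expL} to conclude $m(1,0)=1$ (using that $m\not\equiv 0$). Taking $u=v=0$ next decouples the second argument entirely and yields $m(xy,0)=m(x,0)\,m(y,0)$ for all nonzero real $x,y$; combined with the compatibility relation $m(-x,0)=m(x,0)$ and continuity, this multiplicative equation on $\R\setminus\{0\}$ forces $m(x,0)=|x|^{\lambda}$ for some $\lambda\in\C$. To isolate the dependence on $u$, I would then set $x=y=1$; equation \eqref{expL} becomes d'Alembert's equation $m(1,u+v)+m(1,u-v)=2\,m(1,u)\,m(1,v)$, whose continuous solutions with $m(1,0)=1$ are exactly $m(1,u)=\cosh\alpha u$ for some $\alpha\in\C$. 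To join these two pieces, I would substitute $y=1/x$ and $v=0$ to get $m(1,u)=m(x,u)\,m(1/x,0)$, hence $m(x,u)=|x|^{\lambda}\cosh\alpha u$.

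The step I expect to be the main obstacle is the final consistency check, namely showing that $\alpha$ must be zero. Feeding the ansatz $m(x,u)=|x|^{\lambda}\cosh\alpha u$ back into \eqref{expL} for generic $x,y,u,v$ gives $|xy|^{\lambda}\bigl[\cosh\alpha(xv+u)+\cosh\alpha(xv-u)\bigr]=2|x|^{\lambda}|y|^{\lambda}\cosh\alpha u\,\cosh\alpha v$. After applying the product-to-sum identity on the left, the equation reduces to $\cosh(\alpha xv)\cosh(\alpha u)=\cosh(\alpha v)\cosh(\alpha u)$; specialising, e.g., $u=0$ and varying $x,v$, this identity can only hold for all $x,v$ if $\alpha=0$.

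The converse direction is then an immediate check: the function $m(x,u)=|x|^{\lambda}$ depends only on $|x|$, so \eqref{comp} holds trivially, while both sides of \eqref{expL} equal $2|xy|^{\lambda}$, confirming that $m$ is an exponential on $G//K$.
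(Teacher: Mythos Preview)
Your proposal is correct and follows essentially the same route as the paper: the same sequence of substitutions ($y=1,v=0$; $u=v=0$; $x=y=1$; $y=1/x,v=0$) leading to $m(x,u)=|x|^{\lambda}\cosh\alpha u$, followed by the back-substitution into \eqref{expL} to force $\alpha=0$. You supply a bit more detail on the final consistency check and on the converse verification than the paper does, but the argument is the same.
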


In other words, the hypergroup $L$ has the exponential family $\Phi:L\times \C\to\C$ given by
$$
\Phi(x,u,\lambda)=|x|^{\lambda}\,.
$$ 

Now we describe all sine functions on $L$.

\begin{theorem}\label{sineL}
Let $m_{\lambda}$ be the exponential given by equation \eqref{doucos}. Then every $m_{\lambda}$-sine function $f$ on the double coset hypergroup $G/ /K$ has the form
\begin{equation}\label{dousin}
f(x,u)=c |x|^{\lambda} \ln |x|
\end{equation}
where $c,\lambda$ are arbitrary complex numbers, and conversely, every function of this form is an $m_{\lambda}$-sine function on $G/ /K$. Here $f(x,u)$ denotes the value of the function $f$ on the double coset of the element $(x,u)$ in $G$.
\end{theorem}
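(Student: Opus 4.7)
The plan is to verify sufficiency by direct substitution and then establish necessity by a sequence of specializations in the defining $m_\lambda$-sine equation, leveraging Theorem \ref{group} to dispose of the $x$-dependence and a separate arbitrariness argument to dispose of the $u$-dependence. Sufficiency is routine: in the right-hand side of \eqref{convL} the $u$-argument of $f$ does not enter the formula $c|{\cdot}|^\lambda\ln|{\cdot}|$, so both halves collapse to $c|xy|^\lambda \ln|xy|$, and then the identity $\ln|xy|=\ln|x|+\ln|y|$ rearranges into $f(x,u)\,m_\lambda(y,v)+f(y,v)\,m_\lambda(x,u)$.

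For the converse, I would first rewrite the $m_\lambda$-sine equation using \eqref{convL} and the compatibility condition \eqref{comp} (which lets me fold $f(-xy,-xv+u)$ into $f(xy,xv-u)$) to obtain
\begin{equation*}
\tfrac{1}{2}f(xy,xv+u)+\tfrac{1}{2}f(xy,xv-u)=|y|^\lambda f(x,u)+|x|^\lambda f(y,v).
\end{equation*}
Setting $u=v=0$ reduces this to $h(xy)=|y|^\lambda h(x)+|x|^\lambda h(y)$ for the function $h(x):=f(x,0)$ on the multiplicative group $\R^{*}$, on which $x\mapsto|x|^\lambda$ is an exponential. Applying Theorem \ref{group} to this locally compact group, I obtain a continuous additive $a\colon\R^{*}\to\C$ with $h=a\cdot|{\cdot}|^\lambda$. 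Since $\C$ is torsion-free, $a$ must vanish on $\{\pm 1\}$, so $a(x)=c\ln|x|$ for some $c\in\C$, and hence $h(x)=c|x|^\lambda\ln|x|$.

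Next I would set $x=1,\,v=0$ in the displayed equation and invoke compatibility $f(-y,u)=f(y,u)$ to get $f(y,u)=k(u)|y|^\lambda+h(y)$, where $k(u):=f(1,u)$. Substituting this expression back into the general equation and cancelling the common term $c|xy|^\lambda\ln|xy|$, I expect to be left with
\begin{equation*}
k(xv+u)+k(xv-u)=2k(u)+2k(v)
\end{equation*}
for all $x\in\R^{*}$ and $u,v\in\R$. The decisive step is then to fix any real $w$ and vary $v\ne 0$ while taking $x=w/v$: the left-hand side depends only on $(w,u)$, so $k(v)$ must be constant on $\R\setminus\{0\}$; choosing $w=0$ and using $k(-u)=k(u)$ forces this constant to be $0$, and $k(0)=0$ because $K(1,0)K$ is the hypergroup identity. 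Hence $k\equiv 0$ and $f(y,u)=c|y|^\lambda\ln|y|$.

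The main obstacle I anticipate is the final reduction to $k\equiv 0$: the first half of the argument is essentially bookkeeping around Theorem \ref{group}, but the concluding step genuinely uses the non-group character of the double coset hypergroup, through the freedom to vary $(x,v)$ subject only to the constraint that $xv$ take a prescribed value. Everything else—the sufficiency check, the derivation of the reduced functional equations, and the identification of continuous additive functions on $\R^{*}$—is standard.
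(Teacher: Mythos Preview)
Your argument is correct in substance and reaches the same conclusion as the paper, but the route differs. Two small slips: at the step $x=1,\,v=0$ the compatibility you actually need is $f(y,-u)=f(y,u)$ (not $f(-y,u)=f(y,u)$), though both are contained in \eqref{comp}; and ``choosing $w=0$'' is not available through the substitution $x=w/v$, since that forces $x=0$. The conclusion is unaffected: once you know $k(v)=C$ for all $v\ne 0$ and $k(0)=0$, either continuity of $k$ or a single substitution such as $u=v\ne 0,\ x=1$ (giving $C+0=4C$) yields $C=0$.

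The paper organizes the necessity proof differently. It first puts $x=y=1$ to obtain the square-norm equation for $u\mapsto f(1,u)$, whence $f(1,u)=a u^2$ by continuity; a symmetry argument then gives $f(y,u)=f(y,0)+a u^2|y|^{\lambda}$, and substitution into the full equation forces $a=0$. Only after eliminating the $u$-dependence does the paper divide by $|xy|^{\lambda}$ and identify $f(x,0)$. Your approach inverts this order: you settle $f(x,0)$ first via Theorem~\ref{group} on the multiplicative group $\R^{*}$, then exploit the genuinely hypergroup-specific freedom (varying $v$ while holding $xv$ fixed) to kill $k$. Your route avoids the square-norm detour and makes explicit use of Theorem~\ref{group}; the paper's route is more self-contained and highlights the classical functional-equation structure. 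Both are short and valid.
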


\begin{proof}
It is easy to check that the function $f$ given in \eqref{dousin} is an $m_{\lambda}$-sine function for each complex number $c$.
\vskip.3cm

Now let $f$ be an $m_{\lambda}$-sine function on $G/ /K$, that is $f:G\to \C$ is a continuous function satisfying the compatibility conditions \eqref{comp} and 
\begin{equation}\label{dousin1}
f(x y,x v+u)+f(x y,x v-u)=2 f(x,u) |y|^{\lambda}+ 2 f(y,v) |x|^{\lambda}
\end{equation}
for each $(x,u), (y,v)$ in $G$. Substituting $y=1, u=0$ we have $f(1,0)=0$, and substituting $x=1$ we obain the functional equation
\begin{equation}\label{dousin2}
f(1,u+v)+f(1,u-v)=2 f(1,u) + 2 f(y,v)
\end{equation}
for each $u,v$ in $\R$ and $y\ne 0$. In particular, by putting $y=1$, it follows that the function $u\mapsto f(1,u)$ satisfies the square norm functional equation, hence, by continuity, we infer that $f(1,u)= a u^2$ with some complex number $a$. Substituting back into \eqref{dousin2} we have
$$
f(1,u+v)+f(1,u-v)=2 a u^2 + 2 f(y,v)\,,
$$
and here the left side is symmetric in $u$ and $v$. Consequently, we conclude that
$$
f(y,u)=f(y,0)+a u^2 |y|^{\lambda}
$$
for each $u,y$ in $\R$ with $y\ne 0$. Substitution into the original equation gives $a=0$, hence $f$ does not depend on the second variable. In this case equation \eqref{dousin1} can be divided by $|x y|^{\lambda}$ and we have that $f$ satisfies
$$
\frac{f(x y,0)}{|x y|^{\lambda}}=\frac{f(x, 0)}{|x |^{\lambda}}+\frac{f(y,0)}{|y|^{\lambda}}\,,
$$ 
hence, by continuity, $f(x,u)=f(x,0)= c |x|^{\lambda} \ln |x|$ and the thoerem is proved.
\end{proof}

Finally, we remark that our present results strongly support the conjecture that in the presence of an exponential family on a hypergroup all sine functions can be described in a similar way as in theorems \ref{polyone}, \ref{Su2}, \ref{sinsev}, \ref{Sturmsine2} and \ref{sineL}.

\end{document}